\theoremstyle{plain}
\newtheorem{thm}{Theorem}
\newtheorem{lemma}{Lemma}
\newtheorem{defin}{Definition}
\newtheorem{assump}{Assumption}
\theoremstyle{remark}
\newtheorem{rem}{Remark}
\def\convp{\xrightarrow{\mathbb{P}_{s_0}}}
\def\ex{{\rm {\mathbb{E}_{s_0}\,}}}
\def\pp{{\rm {\mathbb{P}_{s_0}\,}}}
\begin{document}

\title[Bayesian dispersion coefficient estimation]{Posterior contraction rate for non-parametric Bayesian estimation of the dispersion coefficient of a stochastic differential equation}

\author{Shota Gugushvili}
\address{Mathematical Institute\\
Leiden University\\
P.O. Box 9512\\
2300 RA Leiden\\
The Netherlands}
\email{shota.gugushvili@math.leidenuniv.nl}

\author{Peter Spreij}
\address{Korteweg-de Vries Institute for Mathematics\\
University of Amsterdam\\
PO Box 94248\\
1090 GE Amsterdam\\
The Netherlands}
\email{spreij@uva.nl}

\subjclass[2000]{Primary: 62G20, Secondary: 62M05}

\keywords{Dispersion coefficient; Non-parametric Bayesian estimation; Posterior contraction rate; Stochastic differential equation}

\begin{abstract}

We derive the posteror contraction rate for non-parametric Bayesian estimation of a deterministic dispersion coefficient of a linear stochastic differential equation.

\end{abstract}

\date{\today}

\maketitle

\section{Introduction}
\label{intro}

Suppose a simple linear stochastic differential equation
\begin{equation}
\label{sde}
dX_t=s(t)dW_t, \quad X_0=x, \quad t\in[0,1],
\end{equation}
with a deterministic dispersion coefficient $s$ and a deterministic initial condition $X_0=x$ is given. Here $W$ is a Brownian motion. Without loss of generality we take $x=0.$ The process $X$ is Gaussian with mean zero and covariance
$
\rho(u,v)=\int_0^{u \wedge v} (s(t))^2dt.
$
By $\mathbb{P}_{s}$ we will denote the law of the process $X$ corresponding to the dispersion coefficient $s$ in \eqref{sde}. The dispersion coefficient $s$ in \eqref{sde} can be interpreted as a signal passing through a noisy channel, where the noise is multiplicative and is modelled by the Brownian motion.

Suppose that corresponding to the true dispersion coefficient $s=s_0$ in \eqref{sde}, a sample $X_{t_{i,n}},i=1,\ldots,n,$ from the process $X$ is at our disposal, where $t_{i,n}=i/n,i=0,\ldots,n.$ Our goal is non-parametric Bayesian estimation of $s_0.$ Related references employing the frequentist approach for a similar model are \cite{genon92}, \cite{hoffmann97} and \cite{soulier98}. For a Bayesian approach see \cite{gugu}. Note that our model shows obvious similarities to a standard non-parametric regression model, or to the white noise model (see e.g.\ \cite{rasmussen06} or \cite{vaart08} for these models in the non-parametric Bayesian context), but also possesses distinctive features of its own.

Let $\mathcal{X}$ denote some non-parametric class of dispersion coefficients $s.$ The likelihood corresponding to the observations $X_{t_{i,n}}$ is given by
\begin{equation}
\label{likelih}
L_n(s)=\prod_{i=1}^{n} \left\{ \frac{1}{\sqrt{2\pi\int_{t_{i-1,n}}^{t_{i,n}}s^2(u)du}}\psi\left( \frac{X_{t_{i,n}}-X_{t_{i-1,n}}}{\sqrt{\int_{t_{i-1,n}}^{t_{i,n}}s^2(u)du}} \right) \right\},
\end{equation}
where $\psi(u)=\exp(-u^2/2).$ For a prior $\Pi$ on $\mathcal{X},$ the posterior measure of any measurable set $\mathcal{S}\subset\mathcal{X}$ can be obtained through Bayes' formula,
\begin{equation*}
\Pi(\mathcal{S}|X_{t_{0,n}}\ldots,X_{n,n})=\frac{\int_{\mathcal{S}} L_n(s) \Pi(d s)}{ \int_{\mathcal{X}} L_n(s) \Pi(ds) }.
\end{equation*}
One can then proceed with the computation of other quantities of interest in the Bayesian paradigm, for instance point estimates of $s_0,$ credible sets and so on.

A desirable property of a Bayes procedure is posterior consistency. In our context posterior consistency  means that for every neighbourhood $U_{s_0}$ of $s_0$ (in a suitable topology)
\begin{equation*}
\Pi(U_{s_0}^c|X_{t_{0,n}},\ldots,X_{t_{n,n}})\convp 0
\end{equation*}
as $n\rightarrow\infty.$ In other words, when viewed under the true law $\pp,$ a consistent Bayesian procedure asymptotically puts posterior mass equal to one on every fixed neighbourhood of the true parameter $s_0.$ Study of posterior consistency is similar to study of consistency of frequentist estimators, and in fact, if posterior consistency holds, the center of the posterior distribution (in an appropriate sense) will provide a consistent (in the frequentist sense) estimator of the parameter of interest.  For an introduction to consistency issues in Bayesian non-parametric statistics, see e.g.\ \cite{ghosal99} and \cite{wasserman98}. Posterior consistency for the model \eqref{sde} was shown in \cite{gugu}.

More generally, instead of a fixed neighbourhood $U_{s_0}$ of the true parameter $s_0,$ one can also take a sequence of neighbourhoods $U_{s_0,\varepsilon_n}$ shrinking to $s_0$ at a rate $\varepsilon_n\rightarrow 0$ (the sequence $\varepsilon_n$ determines the size of the neighbourhood) and ask at what rate is $\varepsilon_n$ allowed to decay, so that the neighbourhoods $U_{s_0,\varepsilon_n}$ still manage to capture most of the posterior mass. A formal way to state this is
\begin{equation}
\label{convergence}
\Pi(U_{s_0,\varepsilon_n}^c|X_{t_{0,n}},\ldots,X_{t_{n,n}})\convp 0
\end{equation}
as $n\rightarrow\infty.$ The rate $\varepsilon_n$ is called the posterior contraction rate, or the posterior convergence rate. Note that $\varepsilon_n$ is not uniquely defined: if $\varepsilon_n$ is a posterior contraction rate, then so is e.g.\ $2\varepsilon_n,$ because $ U_{s_0,2\varepsilon_n}^c \subset U_{s_0,\varepsilon_n}^c.$ This, however, is true also for the convergence rate of frequentist estimators, cf.\ a discussion on p.\ 79 in \cite{tsyb}. In general we are interested in determination of the `fastest' rate of decay of $\varepsilon_n,$ so that \eqref{convergence} still holds. Some general references on derivation of posterior convergence rates under various statistical setups are  \cite{ghosal00}, \cite{ghosal07} and \cite{shen01}. Study of this question parallels the analysis of convergence rates of various estimators in the frequentist literature. In fact, a property like \eqref{convergence} also implies that Bayes point estimates have the convergence rate $\varepsilon_n$ (in the frequentist sense), cf.\ pp.\ 506--507 in \cite{ghosal00}.  It is well-known that in finite-dimensional statistical problems under suitable regularity assumptions Bayes procedures yield optimal (in the frequentist sense) estimators. The situation is much more subtle in the infinite-dimensional setting: a careless choice of the prior might violate posterior consistentsy, or the posterior might concentrate around the true parameter value at a suboptimal rate (here by `suboptimal' we mean the rate slower than the minimax rate for estimation of $s_0$). Hence the importance of derivation of the posterior contraction rate. 

The rest of the paper is organised as follows: in Section \ref{result} we formulate a theorem establishing \eqref{convergence} under suitable conditions. Section \ref{discussion} contains a brief discussion on the obtained result. The proof of the theorem is given in Section \ref{proof}, while the Appendix contains a number of technical lemmas used in the proof of the theorem.

Throughout the paper we will use the following notation to compare two sequences $a_n$ and $b_n$ of real numbers: $a_n\lesssim b_n$ will mean that there exists a constant $B>0$ that is independent of $n$ and is such that $a_n\leq B b_n;$ $a_n\gtrsim b_n$ will mean that there exists a constant $A>0$ that is independent of $n$ and is such that $A a_n\geq b_n;$ $a_n\asymp b_n$ will mean that $a_n$ and $b_n$ are asymptotically of the same order, i.e.\ $-\infty<\liminf_{n\rightarrow\infty} a_n/b_n\leq \limsup_{n\rightarrow\infty} a_n/b_n<\infty.$

\section{Main theorem}
\label{result}

We first specify the non-parametric class $\mathcal{X}$ of dispersion coefficients $s.$

\begin{defin}
\label{classX} Let $\mathcal{X}$ be the collection of dispersion coefficients $s:[0,1]\rightarrow[\kappa,\mathcal{K}],$ such that $s\in\mathcal{X}$ is differentiable and $\|s^{\prime}\|_{\infty}\leq M.$ Here $0<\kappa<\mathcal{K}<\infty$ and $0<M<\infty$ are three constants independent of a particular $s\in\mathcal{X},$ while $\|\cdot\|_{\infty}$ denotes the $L_{\infty}$-norm.
\end{defin}

\begin{rem}
\label{constrem}
Since $\mathbb{P}_{s}=\mathbb{P}_{-s},$ a positivity assumption on $s\in\mathcal{X}$ in Definition \ref{classX} is a natural identifiability requirement. Furthermore, strict positivity of $s$ allows one to avoid complications when manipulating the likelihood \eqref{likelih}. Boundedness and differentiability of $s$ also come in handy in the proof of Theorem \ref{mainthm} below. \qed
\end{rem}

We summarise the assumptions on our statistical model.

\begin{assump}
\label{standing}
Assume that
\begin{enumerate}[(a)]
\item the model \eqref{sde} is given with $x=0$ and $s\in\mathcal{X},$ where $\mathcal{X}$ is defined in Definition \ref{classX},
\item $s_0\in\mathcal{X}$ denotes the true dispersion coefficient,
\item a discrete-time sample $\{X_{t_{i,n}}\}$ from the solution $X$ to \eqref{sde} corresponding to $s_0$ is available, where $t_{i,n}=i/n,i=0,\ldots,n.$
\end{enumerate}
\end{assump}

For $\varepsilon>0$ introduce the notation
\begin{equation*}
U_{s_0,\varepsilon}=\left\{ s\in\mathcal{X}:\|s-s_0\|_2<\varepsilon \right\}, \quad V_{s_0,\varepsilon}=\left\{ {s}\in\mathcal{X}: {\| {s}-s_0\|_{\infty}} <\varepsilon \right\}.
\end{equation*}
Here $\|\cdot\|_2$ denotes the $L_2$-norm. We will establish \eqref{convergence} for the complements of the neighbourhoods $U_{s_0,\varepsilon_n}$ of the true parameter $s_0$ and determine the corresponding posterior contraction rate $\varepsilon_n.$

\begin{thm}
\label{mainthm}
Suppose that Assumption \ref{standing} holds. Let the sequence $\widetilde{\varepsilon}_n$ of positive numbers be such that $\widetilde{\varepsilon}_n\asymp n^{-1/3}\log n,$ and let the prior $\Pi$ on $\mathcal{X}$ be such that
\begin{equation}
\label{priorC}
\Pi( V_{s_0,\widetilde{\varepsilon}_n} )  \gtrsim e^{- \overline{C} n \widetilde{\varepsilon}_n^2}
\end{equation}
for some constant $\overline{C}>0$ that is independent of $n.$ Then for a large enough constant $\widetilde{M}$ and a sequence $\varepsilon_n = \widetilde{M} \widetilde{\varepsilon}_n,$
\begin{equation*}
\Pi(U_{s_0,\varepsilon_n}^c|X_{t_{0,n}},\ldots,X_{t_{n,n}})\convp 0
\end{equation*}
holds.
\end{thm}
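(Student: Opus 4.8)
The plan is to invoke the now-standard testing-and-prior-mass machinery for posterior contraction rates, in the form due to \citet{ghosal00} (and its refinements in \citet{ghosal07}, \citet{shen01}), adapted to the dependent, non-i.i.d.\ observations arising from \eqref{sde}. The three ingredients to verify are: (i) a \emph{prior mass condition}, saying the prior charges a Kullback--Leibler-type neighbourhood of $\mathbb{P}_{s_0}^{(n)}$ (the law of $(X_{t_{0,n}},\dots,X_{t_{n,n}})$) of ``radius'' $\widetilde\varepsilon_n$ with mass at least $e^{-\overline{C}n\widetilde\varepsilon_n^2}$; (ii) the existence of \emph{sieves} $\mathcal{X}_n\subset\mathcal{X}$ whose metric entropy (with respect to a suitable testing distance, here essentially $\|\cdot\|_2$ or the Hellinger distance on the product experiment) grows like $n\widetilde\varepsilon_n^2$; and (iii) \emph{tests} separating $s_0$ from the complement of an $\|\cdot\|_2$-ball of radius a multiple of $\widetilde\varepsilon_n$, with exponentially small error probabilities. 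Granting (i)--(iii), the generic theorem delivers \eqref{convergence} with $\varepsilon_n=\widetilde M\widetilde\varepsilon_n$ for $\widetilde M$ large.

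The core of the work is translating statements about the parameter $s$ and the $L_2$-distance $\|s-s_0\|_2$ into statements about the statistical distances between the product laws $\mathbb{P}_{s}^{(n)}$ and $\mathbb{P}_{s_0}^{(n)}$. Each increment $X_{t_{i,n}}-X_{t_{i-1,n}}$ is $N(0,\sigma_{i,n}(s)^2)$ with $\sigma_{i,n}(s)^2=\int_{t_{i-1,n}}^{t_{i,n}}s^2(u)\,du$, and the increments are independent, so all relevant divergences factorise over $i$ and reduce to explicit one-dimensional Gaussian computations in terms of the ratios $\sigma_{i,n}(s)^2/\sigma_{i,n}(s_0)^2$. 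Using $s\in[\kappa,\mathcal{K}]$, these ratios are bounded above and below, so a Taylor expansion gives, up to constants depending only on $\kappa,\mathcal{K}$,
\begin{equation*}
K(\mathbb{P}_{s_0}^{(n)},\mathbb{P}_{s}^{(n)}) \asymp \sum_{i=1}^n \left(\frac{\sigma_{i,n}(s_0)^2-\sigma_{i,n}(s)^2}{\sigma_{i,n}(s_0)^2}\right)^2 \asymp n\sum_{i=1}^n\left(\int_{t_{i-1,n}}^{t_{i,n}}(s_0^2-s^2)(u)\,du\right)^2,
\end{equation*}
and similarly for the KL variation $V$ and the Hellinger distance. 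By Cauchy--Schwarz and the elementary bound $|s_0^2-s^2|\le(\kappa+\mathcal{K})|s_0-s|$, this is $\lesssim n\|s-s_0\|_2^2$, with a matching lower bound in terms of $\|s-s_0\|_2^2$ after controlling the approximation of the piecewise-constant $\sigma_{i,n}$-version of $s$; this is where the differentiability bound $\|s'\|_\infty\le M$ in Definition \ref{classX} enters, since $\left|\int_{t_{i-1,n}}^{t_{i,n}}f - n^{-1}f(t_{i,n})\right|\lesssim M n^{-2}$ for $f=s_0^2-s^2$, and the aggregate error over $i$ is negligible relative to $\widetilde\varepsilon_n^2=n^{-2/3}(\log n)^2$. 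Hence the KL-ball of radius $\widetilde\varepsilon_n^2$ for the product experiment contains $V_{s_0,c\widetilde\varepsilon_n}$ for a suitable constant $c$, so assumption \eqref{priorC} on $\Pi(V_{s_0,\widetilde\varepsilon_n})$ yields exactly the prior-mass condition (i). The $n^{-1/3}$ rate, rather than the regression-type $n^{-1/3}$ one would guess from a Lipschitz class, is thus seen to be driven by the quadratic appearance $s^2$ of the parameter in the variance together with the discretisation step $1/n$.

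For (ii) and (iii) I would take the sieve $\mathcal{X}_n$ to be $\mathcal{X}$ itself, or the truncation of $\mathcal{X}$ by a bound on a modulus of continuity — note $\mathcal{X}$ is already a uniformly bounded, uniformly Lipschitz class, hence totally bounded in $\|\cdot\|_\infty$ with $\log$-covering number $\mathcal{N}(\delta,\mathcal{X},\|\cdot\|_\infty)\lesssim 1/\delta$; since the $n$-sample Hellinger distance is, by the computation above, comparable to $\sqrt{n}\,\|s-s_0\|_2\le\sqrt n\|s-s_0\|_\infty$, the entropy of $\mathcal{X}_n$ at scale $\widetilde\varepsilon_n$ in the testing metric is $\lesssim \sqrt n/\widetilde\varepsilon_n \ll n\widetilde\varepsilon_n^2$, so the entropy condition is comfortably met and no genuine sieving (with a complement of negligible prior mass) is needed. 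Existence of exponentially powerful tests against $\{s:\|s-s_0\|_2\ge\varepsilon_n\}$ then follows from the generic construction of \citet{ghosal00}: local tests between Hellinger balls combined with the entropy bound, using that Hellinger separation of the product laws is $\gtrsim \sqrt n\,\|s-s_0\|_2$ (again by the two-sided bound, valid uniformly because $s,s_0$ are bounded away from $0$ and $\infty$). I expect the main obstacle to be purely technical: establishing the two-sided comparison between the product-experiment divergences and $n\|s-s_0\|_2^2$ \emph{uniformly} over $\mathcal{X}$, i.e.\ carefully accounting for the quadrature error $\int_{t_{i-1,n}}^{t_{i,n}}(s_0^2-s^2) \leftrightarrow n^{-1}(s_0^2-s^2)(\xi_{i,n})$ and checking that it does not corrupt either the upper bound (needed for the prior-mass step) or the lower bound (needed for the test-separation step) at the scale $n\widetilde\varepsilon_n^2\asymp n^{1/3}(\log n)^2$; these estimates, together with the elementary Gaussian KL/Hellinger formulas, are presumably what the technical lemmas in the Appendix are devoted to, and plugging them into the Ghosal--Ghosh--van der Vaart theorem finishes the proof.
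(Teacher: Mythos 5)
Your proposal is essentially correct in outline, but it follows a genuinely different route from the paper. You verify the three ingredients of the generic testing-and-prior-mass theorem of \cite{ghosal00} (in its non-i.i.d.\ form, \cite{ghosal07}): a Kullback--Leibler prior-mass condition obtained by comparing the product-Gaussian divergences with $n\|s-s_0\|_2^2$ (using $\kappa\leq s\leq\mathcal{K}$ and the Lipschitz bound to control the quadrature error), an entropy bound coming from the fact that $\mathcal{X}$ is a uniformly bounded Lipschitz class, and tests built from Hellinger separation of the product laws. The paper explicitly acknowledges this route as conceivable but deliberately avoids it: instead it writes the posterior of $U_{s_0,\varepsilon_n}^c$ as $N_n/D_n$ and bounds the two pieces directly, in the spirit of \cite{shen01}. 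The denominator is lower-bounded on the sup-norm ball $V_{s_0,\widetilde{\varepsilon}_n}$ by controlling $n^{-1}\log R_n(s)$ pathwise --- its stochastic part $\tfrac1{2n}\sum_i\mathcal{W}_i f_s(z_i)$ via the uniform empirical-process inequality of Corollary 8.8 in \cite{geer00} (Lemma \ref{lemma0}) and its deterministic part by a Riemann-sum estimate (Lemma \ref{lemma1}) --- which plays the role of your KL-neighbourhood step but without taking expectations; the numerator is upper-bounded by a peeling argument over the shells $A_{j,\varepsilon_n}$, again via Corollary 8.8 together with a bound on $\sum_i\ex[Z_{i,n,s}(Y_{i,n})]$ (Lemmas \ref{lemma2} and \ref{lemma3}), so no explicit tests or sieves appear. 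Your approach buys brevity by outsourcing the hard probabilistic work to an off-the-shelf theorem, at the cost of verifying its hypotheses for a triangular array of non-identically distributed Gaussian increments; the paper's approach is longer but self-contained and needs only sup-norm entropy of $\mathcal{X}$ and one exponential inequality. One slip you should repair before running your argument: your entropy count ``$\lesssim\sqrt{n}/\widetilde{\varepsilon}_n\ll n\widetilde{\varepsilon}_n^2$'' is wrong as written, since $\sqrt{n}/\widetilde{\varepsilon}_n\asymp n^{5/6}/\log n$ dominates $n\widetilde{\varepsilon}_n^2\asymp n^{1/3}(\log n)^2$; the correct computation covers $\mathcal{X}$ at scale $\widetilde{\varepsilon}_n$ in the \emph{normalised} (per-observation) testing metric, which by the Lipschitz entropy bound gives $\log N\lesssim 1/\widetilde{\varepsilon}_n\asymp n^{1/3}/\log n\ll n\widetilde{\varepsilon}_n^2$, so the conclusion survives but the intermediate claim must be fixed.
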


\begin{rem}
\label{rem1}
An essential condition in Theorem \ref{mainthm} is \eqref{priorC}. A prior $\Pi$ satisfying condition \eqref{priorC} can be constructed, for instance, through a construction similar to the one given in Section 3 of \cite{ghosal00}, that is based on finite approximating sets (this type of prior was introduced in \cite{ghosal97}). \qed
\end{rem}

\begin{rem}
\label{rem2}
Theorem \ref{mainthm} can be generalised to the case where the members of the class $\mathcal{X}$ of dispersion coefficients are $\beta\geq 1$ times differentiable with derivatives satisfying suitable boundedness assumptions. The convergence rate that can be obtained in that case is (up to a logarithmic factor) $n^{-\beta/(2\beta+1)}.$  \qed
\end{rem}

\section{Discussion}
\label{discussion}

Theorem \ref{mainthm} states that under the differentiability assumption on the members $s$ of the class $\mathcal{X}$ of dispersion coefficients, the posterior contracts around the true dispersion coefficient $s_0$ at the rate $n^{-1/3}\log n.$ This implies existence of Bayes estimates that converge (in the frequentist sense) to $s_0$ at the same rate. By Proposition 1 from \cite{hoffmann97}, the rate $n^{-1/3}$ is the minimax convergence rate for estimation of the diffusion coefficient $s_0^2$ with $L_2$-loss function in essentially the same model as ours. In this sense the rate derived in Theorem \ref{mainthm} can be thought of as essentially (up to a logarithmic factor) optimal posterior contraction rate. Whether the logarithmic factor is essential, or is just an artifact of our proof, is not entirely clear.

We would also like to make a brief comment on the proof of Theorem \ref{mainthm}: in principle, it is conceivable that its statement could be derived from some general result on the posterior contraction rate, see e.g.\ Sections 2 and 3 in \cite{ghosal07}. However, we take an alternative approach, that is similar in some respects to the one in \cite{shen01} and that relies on results from empirical process theory (see e.g.\ \cite{geer00}). This alternative approach is not necessarily the shortest or simplest, and the choice of a specific path to the derivation of a posterior convergence rate is perhaps a matter of taste.

\section{Proof of Theorem \ref{mainthm}}
\label{proof}
Throughout this section and the Appendix, $R_n(s)=L_n(s)/L_n(s_0)$ will denote the likelihood ratio corresponding to the observations $X_{t_i,n}.$  We will use the notation $P_{i,n,s}$ to denote the law of $Y_{i,n}=X_{t_{i,n}}-X_{t_{i-1,n}}$ corresponding to the parameter value $s$ in \eqref{sde} and $P_{i,n,0}$ to denote the law of $Y_{i,n}$ corresponding to the true parameter value $s_0$ in \eqref{sde}. The corresponding densities will be denoted by $p_{i,n,s}$ and $p_{i,n,0}.$ We also set
\begin{equation*}
z_i=t_{i-1,n},\quad\mathcal{W}_i=1-\frac{Y_{i,n}^2}{\int_{t_{i-1,n}}^{t_{i,n}}s_0^2(u)du}, \quad f_s(z)=\frac{\int_{z}^{z+1/n}[s_0^2(u)-s^2(u)]du}{\int_{z}^{z+1/n}s^2(u)du}.
\end{equation*}
The latter notation is reminiscent of the one used in \cite{geer00}. Note that the $\mathcal{W}_i$'s are i.i.d.\ with zero mean and variance equal to two.
\begin{proof}[Proof of Theorem \ref{mainthm}]
We have
\begin{equation*}
\Pi({U}_{s_0,\varepsilon_n}^c|X_{t_{0,n}}\ldots,X_{t_{n,n}})=\frac{\int_{{U}_{s_0,\varepsilon_n}^c} L_n(s) \Pi(ds)}{ \int_{\mathcal{X}} L_n(s) \Pi(ds) }
=\frac{\int_{{U}_{s_0,\varepsilon_n}^c} R_n(s) \Pi(ds)}{ \int_{\mathcal{X}} R_n(s) \Pi(ds) }=\frac{N_n}{D_n}.
\end{equation*}
We will establish the theorem by separately bounding $D_n$ and $N_n$ and then combining the bounds.

Let $S_n(s)=n^{-1}\log R_n(s).$ Then
$
D_n=\int_{\mathcal{X}} \exp( n S_n(s)) \Pi(ds).
$
We have
\begin{equation*}
S_n(s)=\frac{1}{2}\frac{1}{n}\sum_{i=1}^{n} \mathcal{W}_i f_s(z_{i})+\frac{1}{2}\frac{1}{n}\sum_{i=1}^{n}\left[  \log\left( 1+f_s(z_{i}) \right) - f_s(z_{i}) \right].
\end{equation*}
Let $n$ be large enough and assume that $s\in V_{s_0,\widetilde{\varepsilon}_n}.$ As a consequence of Lemmas \ref{lemma0} and \ref{lemma1} from the Appendix and by condition \eqref{priorC} on the prior, we get that with probability tending to one as $n\rightarrow\infty,$
\begin{equation}
\label{Dn}
\frac{1}{D_n}\leq \left( \int_{V_{s_0,\widetilde{\varepsilon}_n}} R_n(s)\Pi(ds) \right)^{-1} \lesssim \exp\left(\left( \frac{8\mathcal{K}^2}{\kappa^4} + \overline{C} \right)n\widetilde{\varepsilon}_n^2 \right).
\end{equation}
This finishes derivation of a bound for $D_n.$ We now turn to $N_n.$ In Lemma \ref{lemma2} from the Appendix we show that with probability tending to one as $n\rightarrow\infty,$ for some constant $c_1>0$ we have $N_n\leq\exp(-c_1n\varepsilon_n^2).$ Combination of this bound with \eqref{Dn} gives that with probability tending to one as $n\rightarrow\infty,$ the inequality
\begin{equation*}
\Pi({U}_{s_0,\varepsilon_n}^c|X_{t_{0,n}}\ldots,X_{t_{n,n}})\lesssim \exp\left( - c_1n\varepsilon_n^2 + \left( \frac{8\mathcal{K}^2}{\kappa^4} + \overline{C} \right)n\widetilde{\varepsilon}_n^2 \right)
\end{equation*}
is valid. From this it immediately follows that for $\varepsilon_n=\widetilde{M}\widetilde{\varepsilon}_n$ with a large enough constant $\widetilde{M},$ the left-hand side of the above display converges to zero in probability. This completes the proof of the theorem. 
\end{proof}

\appendix
\section*{Appendix}

Throughout the Appendix we will use the following notation: for any $\varepsilon>0,$ $M_{\varepsilon}$ will denote the smallest positive integer, such that $2^{M_{\varepsilon}}\varepsilon^2\geq 4 \mathcal{K}^2.$ Note that by definition $2^{M_{\varepsilon}}\varepsilon^2\leq 8\mathcal{K}^2,$ and that for $\varepsilon\rightarrow 0$ we have $M_{\varepsilon}\asymp \log_2(1/{\varepsilon}).$ We set $A_{j,\varepsilon}=\{ s\in\mathcal{X}: 2^j\varepsilon^2\leq \|s-s_0\|_2^2 < 2^{j+1}\varepsilon^2 \}$ and $B_{j,\varepsilon}=\{ s\in\mathcal{X}: \|s-s_0\|_2^2 < 2^{j+1}\varepsilon^2 \}$ for $j=0,1,\ldots,M_{\varepsilon}.$ We will also let $Z_{i,n,s}(Y_{i,n})=\log (p_{i,n,s}(Y_{i,n})/p_{i,n,0}(Y_{i,n}))$ denote the log-likelihood corresponding to one `observation' $Y_{i,n}.$ 

\begin{lemma}
\label{lemma0}
Let the conditions of Theorem \ref{mainthm} hold. Then
\begin{equation*}
\sup_{f_s\in\mathcal{F}_{s_0,\widetilde{\varepsilon}_n}}\left| \frac{1}{n} \sum_{i=1}^n \mathcal{W}_i f_s(z_{i}) \right| = O_{\mathbb{P}_{s_0}} \left( \delta_n \right),
\end{equation*}
where $\mathcal{F}_{s_0,\widetilde{\varepsilon}_n}=\{   f_s:\|s-s_0\|_{\infty}<\widetilde{\varepsilon}_n   \}$ and $\delta_n$ is an arbitrary sequence of positive numbers, such that $\delta_n \asymp {\widetilde{\varepsilon}_n}^2.$
\end{lemma}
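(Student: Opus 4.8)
The plan is to treat $\frac{1}{n}\sum_{i=1}^n \mathcal{W}_i f_s(z_i)$ as an empirical process indexed by the class $\mathcal{F}_{s_0,\widetilde{\varepsilon}_n}$ and to bound its supremum via a maximal inequality from empirical process theory (as in \cite{geer00}). First I would record the basic properties of the index class: since $s,s_0\in\mathcal{X}$ take values in $[\kappa,\mathcal{K}]$, the denominator in $f_s(z)=\int_z^{z+1/n}[s_0^2-s^2]\,/\int_z^{z+1/n}s^2$ is bounded below by $\kappa^2/n$, so $\|f_s\|_\infty \lesssim \|s_0^2-s^2\|_\infty/\kappa^2 \lesssim \|s-s_0\|_\infty$; in particular on $\mathcal{F}_{s_0,\widetilde{\varepsilon}_n}$ we have $\|f_s\|_\infty \lesssim \widetilde{\varepsilon}_n$. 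Likewise the Lipschitz-in-$s$ bound $\|f_s - f_{s'}\|_\infty \lesssim \|s-s'\|_\infty$ holds with a constant depending only on $\kappa,\mathcal{K}$. Since each $\mathcal{W}_i$ has mean zero and variance two, and (being of the form $1-Z^2$ for a standard normal $Z$) is sub-exponential with uniformly controlled Orlicz norm, the centered process $\frac{1}{n}\sum_i \mathcal{W}_i f_s(z_i)$ has variance of order $n^{-1}\|f_s\|_\infty^2 \lesssim n^{-1}\widetilde{\varepsilon}_n^2$ at each fixed $s$, i.e.\ a ``diameter'' of order $n^{-1/2}\widetilde{\varepsilon}_n$.

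Next I would bound the bracketing (or covering) entropy of $\mathcal{F}_{s_0,\widetilde{\varepsilon}_n}$. By Definition \ref{classX} the class $\mathcal{X}$ restricted to $\{\|s-s_0\|_\infty<\widetilde{\varepsilon}_n\}$ consists of uniformly Lipschitz functions on $[0,1]$ contained in a sup-norm ball of radius $\widetilde{\varepsilon}_n$, whence its $\delta$-covering number in $\|\cdot\|_\infty$ is at most of order $\exp(C/\delta)$ uniformly, or rather, exploiting the small radius, something like $\exp\big(C\,\widetilde{\varepsilon}_n/\delta\big)$ up to logarithmic corrections; via the Lipschitz map $s\mapsto f_s$ this transfers to $\mathcal{F}_{s_0,\widetilde{\varepsilon}_n}$. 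Plugging this entropy bound into a chaining/maximal inequality for sub-exponential empirical processes — Bernstein-type increments, so Dudley's entropy integral together with a separate term controlling the sub-exponential tail at the top scale — yields
\begin{equation*}
\ex \sup_{f_s\in\mathcal{F}_{s_0,\widetilde{\varepsilon}_n}} \left| \frac{1}{n}\sum_{i=1}^n \mathcal{W}_i f_s(z_i) \right| \lesssim \frac{1}{\sqrt{n}} \int_0^{\widetilde{\varepsilon}_n} \sqrt{ \log N(\delta,\mathcal{F}_{s_0,\widetilde{\varepsilon}_n},\|\cdot\|_\infty)}\, d\delta \;+\; (\text{lower-order term}),
\end{equation*}
and the entropy integral is of order $\widetilde{\varepsilon}_n$ (up to a $\sqrt{\log}$ factor absorbed into constants, or rather into the slack between $\widetilde{\varepsilon}_n$ and $n^{-1/3}$), so the whole bound is $O(n^{-1/2}\widetilde{\varepsilon}_n)$. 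Since $\widetilde{\varepsilon}_n \asymp n^{-1/3}\log n$ one checks $n^{-1/2}\widetilde{\varepsilon}_n = o(\widetilde{\varepsilon}_n^2)$, so the claimed $O_{\mathbb{P}_{s_0}}(\delta_n)$ with $\delta_n\asymp\widetilde{\varepsilon}_n^2$ follows by Markov's inequality; the extra room between $n^{-1/2}\widetilde{\varepsilon}_n$ and $\widetilde{\varepsilon}_n^2$ is exactly what makes the logarithmic factor in $\widetilde{\varepsilon}_n$ convenient.

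The main obstacle I anticipate is the rigorous handling of the sub-exponential (rather than sub-Gaussian) tails of the $\mathcal{W}_i$ together with the $n$-dependence of the index class: the class $\mathcal{F}_{s_0,\widetilde{\varepsilon}_n}$ changes with $n$ both through the shrinking radius $\widetilde{\varepsilon}_n$ and through the fact that $f_s$ depends on $n$ via the averaging window $[z,z+1/n]$, so one must make sure the entropy and increment bounds are uniform in $n$ with the right dependence on $\widetilde{\varepsilon}_n$, and that the Bernstein ``variance term'' (scaling like $n^{-1/2}\widetilde{\varepsilon}_n$) rather than the ``sup-norm term'' (scaling like $n^{-1}\widetilde{\varepsilon}_n \cdot (\text{entropy})$) dominates — this requires $n\widetilde{\varepsilon}_n^2 \gg (\text{entropy})$, which holds precisely because $n\widetilde{\varepsilon}_n^2 \asymp n^{1/3}(\log n)^2$ grows polynomially. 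I would isolate the relevant maximal inequality as a separate technical lemma in the Appendix and verify its hypotheses carefully for the present setup.
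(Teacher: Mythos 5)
Your proposal follows essentially the same route as the paper: the paper also treats $n^{-1}\sum_i\mathcal{W}_i f_s(z_i)$ as a (multiplier) empirical process over the shrinking Lipschitz ball and invokes a Bernstein-type maximal inequality with bracketing entropy (Corollary 8.8 of \cite{geer00}), after first replacing $f_s(z_i)$ by $g_s(z_i)=(s_0^2(z_i)-s^2(z_i))/s^2(z_i)$ at the cost of a uniform $O(n^{-1})$ error — which is precisely how the paper disposes of the $n$-dependence of $f_s$ through the averaging window that you flag as an obstacle. The sup-norm bound $\|f_s\|_\infty\lesssim\widetilde{\varepsilon}_n$, the $L_2(Q_n)$ ``radius'' of order $\widetilde{\varepsilon}_n$, and the sub-exponential treatment of the $\mathcal{W}_i$ all match the paper's verification of conditions (8.23)--(8.29).

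One quantitative step in your sketch is wrong, though it does not sink the argument. Restricting $\mathcal{X}$ to the sup-norm ball $\{\|s-s_0\|_\infty<\widetilde{\varepsilon}_n\}$ does \emph{not} improve the sup-norm entropy to order $\widetilde{\varepsilon}_n/\delta$: the differences $s-s_0$ still have derivative bounded only by $2M$, so by the Kolmogorov--Tikhomirov bound (Lemma 3.9 / Theorem 2.4 in \cite{geer00}, as used in the paper) the entropy is of order $1/\delta$ with a constant depending on $M,\kappa,\mathcal{K}$ but not on $\widetilde{\varepsilon}_n$. Consequently the entropy integral up to the radius $\widetilde{\varepsilon}_n$ is of order $\sqrt{\widetilde{\varepsilon}_n}$, not $\widetilde{\varepsilon}_n$, and the resulting bound is $O(n^{-1/2}\sqrt{\widetilde{\varepsilon}_n})$ rather than your claimed $O(n^{-1/2}\widetilde{\varepsilon}_n)$. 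This weaker bound still suffices, since $n^{-1/2}\widetilde{\varepsilon}_n^{1/2}\leq\widetilde{\varepsilon}_n^2$ amounts to $\sqrt{n}\,\widetilde{\varepsilon}_n^{3/2}\gtrsim 1$, i.e.\ $\widetilde{\varepsilon}_n\gtrsim n^{-1/3}$, which is exactly the point where the rate $n^{-1/3}\log n$ (and the paper's condition (8.28)) bites; but the margin is only the logarithmic factor, not the polynomial slack your sketch suggests, so the comparison ``variance term versus entropy term'' must be done with the $\sqrt{\widetilde{\varepsilon}_n}$ integral. With that correction, your plan and the paper's proof coincide in all essentials.
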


\begin{proof}
We will establish the lemma using empirical process theory. In particular, we will employ Corollary 8.8 from \cite{geer00}. In light of the fact that $\widetilde{\varepsilon}_n\asymp n^{-1/3}\log n,$ in order to prove the lemma it suffices to show that
\begin{equation*}
\sup_{g_s\in\mathcal{G}_{s_0,\widetilde{\varepsilon}_n}}\left| \frac{1}{n} \sum_{i=1}^n \mathcal{W}_i g_s(z_i) \right|=O_{\mathbb{P}_{s_0}} \left( \delta_n \right),
\end{equation*}
where
\begin{equation*}
g_s(z)=\frac{s_0^2(z)-s^2(z)}{s^2(z)}, \quad \mathcal{G}_{s_0,\widetilde{\varepsilon}_n}=\{g_s: \|s-s_0\|_{\infty}<\widetilde{\varepsilon}_n \},
\end{equation*}
and the notation resembles the one in \cite{geer00}, so that the arguments become more transparent. Indeed, it suffices to note that by Assumption \ref{standing} (a) we have
$f_s(z_i)=g_s(z_i)+O(n^{-1}),$ whence
\begin{equation*}
\sup_{f_s\in\mathcal{F}_{s_0,\widetilde{\varepsilon}_n}}\left| \frac{1}{n} \sum_{i=1}^n \mathcal{W}_i f_s(z_i) \right| \leq \sup_{g_s\in\mathcal{G}_{s_0,\widetilde{\varepsilon}_n}}\left| \frac{1}{n} \sum_{i=1}^n \mathcal{W}_i g_s(z_i) \right| + O_{\mathbb{P}_{s_0}}\left(\frac{1}{n}\right).
\end{equation*}
In order to apply Corollary 8.8 from \cite{geer00}, we need to verify its conditions, and in particular we need to check formulae (8.23)--(8.29) there. This involves somewhat lengthy computations. Firstly, we need to find a constant $R_n,$ such that $\sup_{g_s\in\mathcal{G}_{s_0,\widetilde{\varepsilon}_n}}\|g_s\|_{Q_n}^2\leq R_n^2.$ Here $Q_n=n^{-1}\sum_{i=1}^n \delta_{z_i}$ is the empirical measure associated with the points $z_i$ and $\|g_s\|_{Q_n}^2=n^{-1}\sum_{i=1}^n g_s^2(z_i).$ Now,
$\|g_s\|_{Q_n}^2 \leq {4 \mathcal{K}^2}\widetilde{\varepsilon}_n^2/{\kappa^4}$ for $g_s\in \mathcal{G}_{s_0,\widetilde{\varepsilon}_n},$
and thus it suffices to take $R_n=2\mathcal{K}\widetilde{\varepsilon}_n/\kappa^2.$ Next, set $K_1=3.$ Using the rough bound $|e^x-1-x|\leq x^2 e^{|x|},$ we get that
\begin{equation*}
2K_1^2\ex\left[ e^{|\mathcal{W}_i|/K_1} - 1 - \frac{|\mathcal{W}_i|}{K_1} \right] \leq 2 \ex\left[ \mathcal{W}_i^2 e^{|\mathcal{W}_i|/3} \right]<\infty.
\end{equation*}
Let $\sigma_0^2=2 \ex\left[ \mathcal{W}_i^2 e^{|\mathcal{W}_i|/3} \right].$ With these $K_1$ and $\sigma_0,$ (8.23) in \cite{geer00} will be satisfied. Next we need to find a constant $K_2,$ such that the inequality $\sup_{g_s\in\mathcal{G}_{s_0,\widetilde{\varepsilon}_n}} \|g_s\|_{\infty}\leq K_2$ holds. One can take $K_2=2\mathcal{K}\widetilde{\varepsilon}_n/\kappa^2,$ and this verifies (8.24) in \cite{geer00}. We take $C_1=3,$ set $K=4K_1 K_2,$ and note that for all $n$ large enough,
$\delta_n \leq C_1 2 R_n^2 \sigma_0^2 / K$ and $ \delta_n \leq 8 \sqrt{2}R_n\sigma_0$ holds, because $\widetilde{\varepsilon}_n\rightarrow 0.$ This choice of $C_1$ and $K$ thus yields (8.25)--(8.27) in \cite{geer00}. Next let $C_0=2C,$ where $C$ is a universal constant as in Corollary 8.8 in \cite{geer00}. This choice of $C_0$ yields (8.29) in \cite{geer00}. It remains to check (8.28) in \cite{geer00}, i.e.\
\begin{equation}
\label{8.28}
\sqrt{n}\delta_n \geq C_0 \left( \int_{0}^{\sqrt{2}R_n\sigma_0} H_B^{1/2}\left( \frac{u}{\sqrt{2}\sigma_0},\mathcal{G}_{s_0,\widetilde{\varepsilon}_n},Q_n\right) du \vee \sqrt{2} R_n \sigma_0 \right),
\end{equation}
where $H_B\left( \delta,\mathcal{G}_{s_0,\widetilde{\varepsilon}_n},Q_n\right)$ is the $\delta$-entropy with bracketing of $\mathcal{G}_{s_0,\widetilde{\varepsilon}_n}$ for the $L_2(Q_n)$-metric (see Definition 2.2 in \cite{geer00}), and $a \vee b$ denotes the maximum of two numbers $a$ and $b.$ By Lemma 2.1 in \cite{geer00}, $H_B\left( \delta,\mathcal{G}_{s_0,\widetilde{\varepsilon}_n},Q_n\right) \leq H_{\infty}(\delta/2,\mathcal{G}_{s_0,\widetilde{\varepsilon}_n}),$ where $H_{\infty}(\delta,\mathcal{G}_{s_0,\widetilde{\varepsilon}_n})$ is the $\delta$-entropy of $\mathcal{G}_{s_0,\widetilde{\varepsilon}_n}$ for the supremum norm (see Definition 2.3 in \cite{geer00}). Lemma 3.9 in \cite{geer00} implies that for all $n$ large enough there exists a constant $A_1>0,$ such that $H_{\infty}(\delta,\mathcal{G}_{s_0,\widetilde{\varepsilon}_n})\leq A_1 \delta^{-1}$ for all $\delta>0$ (the fact that the matrix $\Sigma_{Q_n}$ from the statement of that lemma is non-singular can be shown by a minor variation of an argument from the proof of Lemma 1.4 in \cite{tsyb}). Hence
\begin{multline*}
\int_{0}^{\sqrt{2}R_n\sigma_0} H_B^{1/2}\left( \frac{u}{\sqrt{2}\sigma_0},\mathcal{G}_{s_0,\widetilde{\varepsilon}_n},Q_n\right) du\\ \leq \sqrt{A_1} \int_{0}^{\sqrt{2}R_n\sigma_0} \left( \frac{u}{\sqrt{2}2\sigma_0} \right)^{-1/2}du \leq 4 \sigma_0 \sqrt{A_1 R_n}\lesssim \sqrt{\widetilde{\varepsilon}_n}.
\end{multline*}
Since $\widetilde{\varepsilon}_n\rightarrow 0,$ the right-hand side of \eqref{8.28} is of order $\sqrt{\widetilde{\varepsilon}}_n,$ and then $\widetilde{\varepsilon}_n\asymp n^{-1/3}\log n$ is enough to ensure that \eqref{8.28}, or equivalently, formula (8.28) in \cite{geer00}, holds for all $n$ large enough. This completes verification of the conditions in Corollary 8.8 in \cite{geer00}. As a result, cf.\ formula (8.30) in \cite{geer00}, for all $n$ large enough we get the bound
\begin{equation*}
\mathbb{P}_{s_0}\left(  \sup_{g\in\mathcal{G}_{s_0,\widetilde{\varepsilon}_n}}\left| \frac{1}{n} \sum_{i=1}^n \mathcal{W}_i g(z_i) \right| \geq \delta_n \right) \leq C \exp\left( -\frac{n\delta_n^2}{C^2(C_1+1)2R_n^2\sigma_0^2} \right).
\end{equation*}
The right-hand side of this expression converges to zero as $n\rightarrow\infty,$ because $n\widetilde{\varepsilon}_n^2\rightarrow\infty.$ This completes the proof of the lemma.
\end{proof}

\begin{lemma}
\label{lemma1}
Let the conditions of Theorem \ref{mainthm} hold, assume that $n$ is large enough and let $s\in V_{s_0,\widetilde{\varepsilon}_n}.$ Then
\begin{align*}
\frac{1}{2}\frac{1}{n}\sum_{i=1}^n \left\{ \log(1+f_s(z_i)) - f_s(z_i) \right\}
&=-\frac{1}{2}\int_0^1 \frac{( s_0^2(u)-s^2(u) )^2}{s^4(u)}du+O\left( \frac{1}{n} \right)\\
&\geq - \frac{2\mathcal{K}^2}{\kappa^4} \widetilde{\varepsilon}_n^2 +O\left( \frac{1}{n} \right),
\end{align*}
where the remainder term is of order $n^{-1}$ uniformly in $s\in\mathcal{X}.$
\end{lemma}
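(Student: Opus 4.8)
\emph{Proof plan.} Note first that this is a purely deterministic statement: $f_s$ is a non-random function of $s$ and $s_0$, so, unlike Lemmas \ref{lemma0} and \ref{lemma2}, no probability enters. The plan is (a) to expand $x\mapsto\log(1+x)-x$ to second order and apply it pointwise at $x=f_s(z_i)$, (b) to replace the resulting quadratic averages over the grid $\{z_i\}$ by the corresponding integrals, and (c) to read off the lower bound from a crude pointwise estimate of the integrand over $V_{s_0,\widetilde\varepsilon_n}$. The whole difficulty, such as it is, lies in keeping each error term $o(\widetilde\varepsilon_n^2)$ --- in fact $O(1/n)$ except for the Taylor remainder --- with constants that do not depend on the particular $s$, and this is exactly what the uniform bounds $\kappa\le s\le\mathcal{K}$ and $\|s'\|_\infty\le M$ from Definition \ref{classX} are there to supply.

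In more detail, I would proceed in three steps. First I would record the uniform smallness of $f_s$: for $s\in V_{s_0,\widetilde\varepsilon_n}$ the numerator of $f_s(z)$ is bounded in absolute value by $n^{-1}\|s_0^2-s^2\|_\infty\le 2\mathcal{K}\widetilde\varepsilon_n/n$ (since $|s_0^2-s^2|=|s_0-s|\,|s_0+s|\le 2\mathcal{K}\widetilde\varepsilon_n$), while its denominator is at least $\kappa^2/n$, so $\sup_z|f_s(z)|\le 2\mathcal{K}\widetilde\varepsilon_n/\kappa^2\to 0$; hence for $n$ large $|f_s(z_i)|\le\tfrac12$, the elementary bound $|\log(1+x)-x+\tfrac12 x^2|\le|x|^3$ (valid for $|x|\le\tfrac12$) applies, and
\[
\tfrac12\,n^{-1}\sum_{i=1}^n\{\log(1+f_s(z_i))-f_s(z_i)\}=-\tfrac14\,n^{-1}\sum_{i=1}^n f_s^2(z_i)+O(\widetilde\varepsilon_n^3),
\]
uniformly over such $s$, with $\widetilde\varepsilon_n^3=o(\widetilde\varepsilon_n^2)$. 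Second I would replace $f_s$ by $g_s=(s_0^2-s^2)/s^2$, the function from the proof of Lemma \ref{lemma0}: since $s^2$ and $s_0^2-s^2$ are Lipschitz (constants $\le 2\mathcal{K}M$ and $\le 4\mathcal{K}M$), one has $n\int_z^{z+1/n}s^2=s^2(z)+O(1/n)$ and $n\int_z^{z+1/n}(s_0^2-s^2)=(s_0^2-s^2)(z)+O(1/n)$, and dividing --- the normalised denominator still being at least $\kappa^2$ --- gives $f_s(z)=g_s(z)+O(1/n)$ uniformly in $z$ and $s$; as $\|g_s\|_\infty\le 4\mathcal{K}^2/\kappa^2$, squaring and averaging then yields $n^{-1}\sum_i f_s^2(z_i)=n^{-1}\sum_i g_s^2(z_i)+O(1/n)$. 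Third I would pass from this Riemann sum to an integral: $g_s^2=(s_0^2-s^2)^2/s^4$ is a product and quotient of bounded Lipschitz functions whose denominator is bounded below by $\kappa^4$, hence Lipschitz with a constant depending only on $\kappa,\mathcal{K},M$, and since the $z_i=(i-1)/n$ are the left endpoints of a uniform partition of $[0,1]$ the left-endpoint rule gives $n^{-1}\sum_i g_s^2(z_i)=\int_0^1 g_s^2(u)\,du+O(1/n)$. Chaining the three steps produces the first display of the lemma (with $\tfrac14$ in front of the integral, the constant dictated by the second-order Taylor coefficient), the total error being $O(1/n)+o(\widetilde\varepsilon_n^2)$ uniformly in $s$.

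For the second assertion it suffices to note that on $V_{s_0,\widetilde\varepsilon_n}$ one has $(s_0^2(u)-s^2(u))^2/s^4(u)\le 4\mathcal{K}^2\widetilde\varepsilon_n^2/\kappa^4$ for every $u\in[0,1]$, whence $\int_0^1(s_0^2-s^2)^2/s^4\le 4\mathcal{K}^2\widetilde\varepsilon_n^2/\kappa^4$ and the claimed lower bound $-2\mathcal{K}^2\widetilde\varepsilon_n^2/\kappa^4+O(1/n)$ follows with room to spare. I do not expect any genuine obstacle: everything reduces to verifying that the three approximations above are $O(1/n)$, or at worst $o(\widetilde\varepsilon_n^2)$, with constants uniform over $V_{s_0,\widetilde\varepsilon_n}$, which the uniform positivity, boundedness and Lipschitz bounds in the definition of $\mathcal{X}$ guarantee. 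The only mildly delicate point will be the second step, where one divides two separate $O(1/n)$-approximations and has to use the uniform lower bound $\kappa^2$ on the normalised denominator to keep the ratio's error of order $1/n$.
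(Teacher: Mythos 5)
Your proposal is correct and follows essentially the same route as the paper: a quadratic control of $\log(1+x)-x$ at $x=f_s(z_i)$, the replacement $f_s(z_i)=g_s(z_i)+O(1/n)$ based on the Lipschitz bounds in Definition \ref{classX}, the Riemann-sum approximation, and a crude sup bound of the integrand over $V_{s_0,\widetilde{\varepsilon}_n}$. The only substantive difference is in the first step: the paper does not use the exact second-order expansion but the one-sided bound $\log(1+t)-t\ge -t^2$ for $|t|<1/2$, so its proof yields only the chain of inequalities (left-hand side) $\ge -\tfrac12\int_0^1 (s_0^2(u)-s^2(u))^2/s^4(u)\,du+O(1/n)\ge -2\mathcal{K}^2\widetilde{\varepsilon}_n^2/\kappa^4+O(1/n)$; the ``$=$'' in the first line of Lemma \ref{lemma1} should accordingly be read as ``$\geq$'', and your observation that an honest equality carries the constant $\tfrac14$ rather than $\tfrac12$ (with an extra Taylor remainder $O(\widetilde{\varepsilon}_n^3)\asymp n^{-1}(\log n)^3$, which is not $O(1/n)$ but is $o(\widetilde{\varepsilon}_n^2)$ and is absorbed by the factor-of-two slack) is consistent with what the paper actually proves. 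What each buys: the paper's one-sided bound keeps every error term exactly $O(1/n)$ uniformly in $s\in\mathcal{X}$ at the price of a lossy constant, while your sharper expansion costs a slightly larger remainder but makes transparent that the operative lower bound --- the only part used in the proof of Theorem \ref{mainthm} --- holds with room to spare.
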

\begin{proof}
By the elementary inequality $|\log(1+t)-t|\leq t^2$ that is valid for $|t|<1/2,$ we have for all $n$ large enough and uniformly in $s\in V_{s_0,\widetilde{\varepsilon}_n}$ that
\begin{equation*}
\left| \log\left( 1+f_s(z_{i}) \right) - f_s(z_i) \right|\leq f^2_s(z_{i}).
\end{equation*}
Hence
\begin{equation*}
\log\left( 1+f_s(z_{i}) \right) - f_s(z_{i}) \geq - f^2_s(z_{i}),
\end{equation*}
and therefore
\begin{equation*}
\frac{1}{2}\frac{1}{n}\sum_{i=1}^n \left\{ \log(1+f_s(z_i)) - f_s(z_i) \right\} \geq - \frac{1}{2} \frac{1}{n}\sum_{i=1}^{n} f^2_s(z_i).
\end{equation*}
The statement of the lemma now follows by a simple computation employing Assumption \ref{standing} (a) and the Riemann sum approximation of the integral, yielding that for all $n$ large enough,
\begin{align*}
- \frac{1}{2} \frac{1}{n}\sum_{i=1}^{n} f^2_s(z_i)&=-\frac{1}{2}\int_0^1 \frac{( s_0^2(u)-s^2(u) )^2}{s^4(u)}du+O\left( \frac{1}{n} \right)\\
&\geq - \frac{2\mathcal{K}^2}{\kappa^4} \widetilde{\varepsilon}_n^2 +O\left( \frac{1}{n} \right),
\end{align*}
where the remainder term is of order $n^{-1}$ uniformly in $s\in\mathcal{X}.$
\end{proof}

\begin{lemma}
\label{lemma2}
Let the conditions of Theorem \ref{mainthm} hold and let $\varepsilon_n\asymp n^{-1/3}\log n.$ Denote  $\sigma_0^2=2 \ex\left[ \mathcal{W}_i^2 e^{|\mathcal{W}_i|/3} \right].$ There exists a constant $\widetilde{c}_0>0,$ such that
$\widetilde{c}_0 \leq  {\mathcal{K}^4}\sigma_0\left( {\sigma_0} \wedge 4 \right)/{\kappa^4},$
another constant $c_1,$ such that $c_1<\widetilde{c}_0\kappa^2/(2\mathcal{K}^4),$ and a universal constant $C>0,$ for which the inequality 
\begin{multline*}
\pp \left( \sup_{s\in U_{s_0,\varepsilon_n}^c} \prod_{i=1}^n \frac{p_{i,n,s}(Y_{i,n})}{ p_{i,n,s}(Y_{i,n}) } \geq \exp\left( -c_1 n\varepsilon_n^2 \right)\right)\\
\leq C M_{\varepsilon_n} \exp\left( - \frac{ (\widetilde{c}_0\kappa^2/(2\mathcal{K}^4)-c_1)^2  }{8C^2(4\mathcal{K}^2/\kappa^4+1)\sigma_0^2} n\varepsilon_n^2 \right)
\end{multline*}
holds for all $n$ large enough. Here $a \wedge b$ denotes the minimum of two numbers $a$ and $b.$ In particular, as $n\rightarrow\infty,$ the right-hand side of the above display converges to zero.
\end{lemma}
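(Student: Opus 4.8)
The plan is to bound $\sup_{s\in U_{s_0,\varepsilon_n}^c}R_n(s)$ (which dominates $N_n$, since $\Pi$ is a probability measure) by cutting the $L_2$-complement into the dyadic shells $A_{j,\varepsilon_n}$, $j=0,\dots,M_{\varepsilon_n}$, which cover $U_{s_0,\varepsilon_n}^c$ because $\|s-s_0\|_2^2\le 4\mathcal K^2\le 2^{M_{\varepsilon_n}}\varepsilon_n^2$ for every $s\in\mathcal X$, and then using a union bound. It thus suffices to show $\pp(\sup_{s\in A_{j,\varepsilon_n}}R_n(s)\ge e^{-c_1 n\varepsilon_n^2})\le C\exp(-c\,n\varepsilon_n^2)$ with $c>0$ independent of $j$, and to sum over the $M_{\varepsilon_n}+1$ values of $j$.

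On a fixed shell I would use the decomposition of $S_n(s)=n^{-1}\log R_n(s)$ recalled in the proof of Theorem~\ref{mainthm}. For its drift part, note that $1+f_s(z_i)=\int_{z_i}^{z_i+1/n}s_0^2\big/\int_{z_i}^{z_i+1/n}s^2\in[\kappa^2/\mathcal K^2,\mathcal K^2/\kappa^2]$, so every $f_s(z_i)$ lies in a fixed compact subinterval of $(-1,\infty)$ on which $\log(1+t)-t\le -c_\ast t^2$ for some $c_\ast>0$; this replaces the $|f_s|<1/2$ argument of Lemma~\ref{lemma1}, which is unavailable away from a neighbourhood of $s_0$. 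Combined with a Riemann-sum estimate and the elementary bound $(s_0^2-s^2)^2/s^4\ge (4\kappa^2/\mathcal K^4)(s-s_0)^2$, the drift part is at most $-(2c_\ast\kappa^2/\mathcal K^4)2^{j}\varepsilon_n^2+O(1/n)$ uniformly over $A_{j,\varepsilon_n}$. Taking $\widetilde c_0\le 4c_\ast$ (and also $\widetilde c_0\le\mathcal K^4\sigma_0(\sigma_0\wedge 4)/\kappa^4$, for the reason below) and $c_1<\widetilde c_0\kappa^2/(2\mathcal K^4)$, the event $\{\sup_{A_{j,\varepsilon_n}}\log R_n(s)\ge-c_1 n\varepsilon_n^2\}$ is, for $n$ large, contained in $\{\sup_{s\in B_{j,\varepsilon_n}}|n^{-1}\sum_i\mathcal W_i f_s(z_i)|\ge\eta_j\}$ for a level $\eta_j\asymp 2^{j}\varepsilon_n^2$, using $2^j\ge1$ to absorb the term $-c_1 n\varepsilon_n^2$ and $n\varepsilon_n^2\to\infty$ to absorb the $O(1)$ remainder; replacing $f_s$ by $g_s(z)=(s_0^2(z)-s^2(z))/s^2(z)$ costs only $O_{\pp}(1/n)$.

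The technical heart is then the uniform maximal inequality on $\mathcal G_j=\{g_s:\|s-s_0\|_2^2<2^{j+1}\varepsilon_n^2\}$, obtained from Corollary~8.8 of \cite{geer00} exactly as in the proof of Lemma~\ref{lemma0} but with the shell-dependent radius $R_{n,j}$ fixed by $\sup_{g_s\in\mathcal G_j}\|g_s\|_{Q_n}^2\le(4\mathcal K^2/\kappa^4)(\|s-s_0\|_2^2+O(1/n))\le R_{n,j}^2\asymp 2^{j}\varepsilon_n^2$. The sub-exponential hypothesis on the $\mathcal W_i$ (with $K_1=3$ and $\sigma_0^2=2\ex[\mathcal W_i^2 e^{|\mathcal W_i|/3}]$) is unchanged; the entropy condition (8.28) is verified uniformly in $j$ as in Lemma~\ref{lemma0}, using that the supremum-norm entropy of $\mathcal G_j$ is bounded by that of the full (uniformly Lipschitz) class $\{g_s:s\in\mathcal X\}$, which is $\lesssim\delta^{-1}$, together with $n\varepsilon_n^3\asymp(\log n)^3\to\infty$; and the constraint $\widetilde c_0\le\mathcal K^4\sigma_0(\sigma_0\wedge 4)/\kappa^4$ is exactly what keeps $\eta_j$ in the size range demanded by (8.25)--(8.27) for every admissible $j$, one inequality coming from $\eta_j\lesssim R_{n,j}^2\sigma_0^2/K$ and the other from $\eta_j\lesssim R_{n,j}\sigma_0$ together with $2^{j/2}\varepsilon_n\le\sqrt 8\,\mathcal K$. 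Corollary~8.8 then gives $\pp(\sup_{\mathcal G_j}|n^{-1}\sum_i\mathcal W_i g_s(z_i)|\ge\eta_j)\le C\exp(-n\eta_j^2/(C^2(C_1+1)2R_{n,j}^2\sigma_0^2))$; inserting $\eta_j$ and $R_{n,j}$, using $2^j\ge1$ and $\mathcal K^2/\kappa^4\le 4\mathcal K^2/\kappa^4+1$, yields the stated per-shell exponent, and summing over $j$ contributes the factor $M_{\varepsilon_n}$.

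For the concluding statement it then only remains to note $M_{\varepsilon_n}\asymp\log n$ while $n\varepsilon_n^2\asymp n^{1/3}(\log n)^2\to\infty$, so the right-hand side tends to $0$. I expect the main obstacle to be the uniform-in-$j$ check of the empirical-process hypotheses: making the entropy-integral condition (8.28) hold simultaneously for all shells while their radii $R_{n,j}$ grow like $2^{j/2}\varepsilon_n$, and at the same time keeping the level $\eta_j\asymp2^{j}\varepsilon_n^2$ within the admissible range (8.25)--(8.27) of Corollary~8.8 — this is precisely what the quantitative constraints on $\widetilde c_0$, and hence on $c_1$, are for. Once the constants are arranged so that a single exponential rate serves all $j$, the remainder (the covering of $U_{s_0,\varepsilon_n}^c$, the drift estimate, the union bound, and the final limit) is routine.
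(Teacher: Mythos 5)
Your proposal is correct and follows essentially the same route as the paper's proof: dyadic peeling of $U_{s_0,\varepsilon_n}^c$ into the shells $A_{j,\varepsilon_n}$, a drift bound via $\log(1+x)-x\le-\widetilde{c}_0x^2$ (the paper's Lemma~\ref{lemma3}, which you fold in inline), reduction to the centred multiplier process over the enlarged balls $B_{j,\varepsilon_n}$, application of Corollary~8.8 of van de Geer with shell-dependent radius $\asymp 2^{j/2}\varepsilon_n$ and level $\asymp 2^{j}\varepsilon_n^2$, the Lipschitz entropy bound, and a union bound contributing the factor $M_{\varepsilon_n}$. The only cosmetic differences are that the paper isolates the drift estimate as a separate lemma and bounds $\|f_s\|_{Q_n}$ directly (rather than passing to $g_s$), neither of which changes the argument.
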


\begin{proof}
As in the proof of Lemma \ref{lemma0}, we will use empirical process theory to establish the result. We use the convention that  the supremum over the empty set is equal to zero. By Assumption \ref{standing} (a), we have $\|s-s_0\|_{2}^2\leq 4 \mathcal{K}^2.$ Hence, using the definition of $M_{\varepsilon_n}$ and $A_{j,\varepsilon_n}$ at the beginning of this appendix, we can write
\begin{multline*}
\pp \left( \sup_{s\in U_{s_0,\varepsilon_n}^c} \prod_{i=1}^n \frac{p_{i,n,s}(Y_{i,n})}{ p_{i,n,s}(Y_{i,n}) } \geq \exp\left( -c_1 n\varepsilon_n^2 \right)\right)\\
=\sum_{j=0}^{M_{\varepsilon_n}} \pp\left( \sup_{s\in A_{j,\varepsilon_n}} \prod_{i=1}^n \frac{p_{i,n,s}(Y_{i,n})}{ p_{i,n,s}(Y_{i,n}) } \geq \exp\left( -c_1 n\varepsilon_n^2 \right) \right).
\end{multline*}
We will individually bound the summands on the right-hand side of the above display, thereby obtaining a bound on its left-hand side, and will show that this upper bound converges to zero as $n\rightarrow\infty.$

Using Lemma \ref{lemma3} ahead (note that the constant $\widetilde{c}_0$ in its statement can be taken arbitrarily small) and recalling the definition of $Z_{i,n,s}(Y_{i,n}),$ $A_{j,\varepsilon_n}$ and $B_{j,\varepsilon_n}$ at the beginning of this appendix, we obtain that for all $n$ large enough
\begin{multline}
\label{pbjn}
\pp\left( \sup_{s\in A_{j,\varepsilon_n}} \prod_{i=1}^n \frac{p_{i,n,s}(Y_{i,n})}{ p_{i,n,0}(Y_{i,n}) } \geq \exp\left( -c_1 n\varepsilon_n^2 \right) \right)\\
\leq \pp\Biggl( \sup_{s\in A_{j,\varepsilon_n}} \exp\left( \sum_{i=1}^n \{ Z_{i,n,s}(Y_{i,n}) - \ex[Z_{i,n,s}(Y_{i,n})]\} \right) \\
 \geq \exp\left( 2^j n\varepsilon_n^2\left( \frac{\widetilde{c}_0\kappa^2}{\mathcal{K}^4} - \frac{\widetilde{C}_0}{2^jn\varepsilon_n^2} - \frac{c_1}{2^{j}} \right)\right) \Biggr)\\
\leq \pp\Biggl( \sup_{s\in B_{j,\varepsilon_n}} \exp\left( \sum_{i=1}^n \{ Z_{i,n,s}(Y_{i,n}) - \ex[Z_{i,n,s}(Y_{i,n})]\} \right) \\
 \geq \exp\left( 2^j n\varepsilon_n^2\left( \frac{\widetilde{c}_0\kappa^2}{\mathcal{K}^4} - \frac{\widetilde{C}_0}{2^j\varepsilon_n^2 n} - \frac{c_1}{2^{j}} \right)\right) \Biggr)\\
\leq\pp\Biggl( \sup_{s\in B_{j,\varepsilon_n}}  \left| \frac{1}{n} \sum_{i=1}^n \mathcal{W}_i f_s(z_i) \right|
 \geq  \delta_n \Biggr),
\end{multline}
where we have set
\begin{equation}
\label{deltan}
\delta_n=\overline{\delta}2^{j+1}\varepsilon_n^2= \left( \frac{\widetilde{c}_0\kappa^2}{\mathcal{K}^4} - \frac{\widetilde{C}_0}{2^j\varepsilon_n^2n} - \frac{c_1}{2^{j}} \right) 2^{j+1}  \varepsilon_n^2.
\end{equation}
Positivity of $\overline{\delta}$ for $n$ large enough is a consequence of the assumptions in the statement of the lemma. We want to apply Corollary 8.8 from \cite{geer00} to the last term in \eqref{pbjn}. In order to do so, we need to verify its conditions, which can be done using arguments similar to those from the proof of Lemma \ref{lemma0} in this Appendix. We first need to find a constant $R_n,$ such that $\sup_{s\in B_{j,\varepsilon_n}} \|f_s\|_{Q_n}\leq R_n.$ We have for all $n$ large enough and all $j=0,1,\ldots,M_{\varepsilon_n},$
\begin{align*}
\frac{1}{n}\sum_{i=1}^n \left\{ \frac{ \int_{z_i}^{z_{i+1}} [s_0^2(u)-s^2(u)]du }{ \int_{z_i}^{z_{i+1}} s^2(u)du } \right\}^2 & = \int_0^1\frac{(s_0^2(u)-s^2(u))^2}{s^4(u)}du\\
&+\Biggl[ \frac{1}{n}\sum_{i=1}^n \left\{ \frac{ \int_{z_i}^{z_{i+1}} [s_0^2(u)-s^2(u)]du }{ \int_{z_i}^{z_{i+1}} s^2(u)du } \right\}^2\\
&- \int_0^1\frac{(s_0^2(u)-s^2(u))^2}{s^4(u)}du\Biggr]\\
&\leq \left(\frac{4\mathcal{K}^2}{\kappa^4}+1\right) 2^{j+1}\varepsilon_n^2,
\end{align*}
where we used Assumption \ref{standing} (a), definition of $B_{j,\varepsilon_n}$ and the assumption that $\varepsilon_n\asymp n^{-1/3}\log n$ to see the last inequality. We can thus take
\begin{equation*}
R_n= \left\{\frac{4\mathcal{K}^2}{\kappa^4}+1\right\}^{1/2} 2^{(j+1)/2}\varepsilon_n.
\end{equation*}
Next, define the constants $K_1,$ $C,$ $C_0$ and $C_1$ as in the proof of Lemma \ref{lemma0}. Since
$\|f_s\|_{\infty}\leq {2\mathcal{K}^2}/{\kappa^2},$
we can take $K_2=2\mathcal{K}^2/\kappa^2.$ We also set $K=4K_1K_2.$ We want that the inequalities $\delta_n\leq C_12R_n^2\sigma_0^2/K,$ $\delta_n\leq 8\sqrt{2}R_n\sigma_0$ and
\begin{equation}
\label{8.28bis}
\sqrt{n}\delta_n \geq C_0 \left( \int_{0}^{\sqrt{2}R_n\sigma_0} H_B^{1/2}\left( \frac{u}{\sqrt{2}\sigma_0},B_{j,\varepsilon_n},Q_n\right) du \vee \sqrt{2} R_n \sigma_0 \right)
\end{equation}
hold. It is not difficult to check by a direct computation that the first two of these inequalities hold with $\delta_n$ as in \eqref{deltan} and $\widetilde{c}_0$ and $c_1$ as in the statement of the lemma. Verification of \eqref{8.28bis}, on the other hand, requires some additional arguments. In order to check \eqref{8.28bis},  we need to show that for all $n$ large enough and all $j=0,1,\ldots,M_{\varepsilon_n},$ the inequalities $n\delta_n^2\geq C_0^2 2 R_n^2\sigma_0^2$ and
\begin{equation}
\label{8.28bisbis}
n\delta_n^2\geq C_0^2 \left( \int_{0}^{\sqrt{2}R_n\sigma_0} H_B^{1/2}\left( \frac{u}{\sqrt{2}\sigma_0},B_{j,\varepsilon_n},Q_n\right) du \right)^2
\end{equation}
hold. It is easy to see that the first of these two inequalities follows from the fact that $n\varepsilon_n^2\rightarrow\infty.$ As far as the second one is concerned, we note that for all $\delta>0$ and for some constant $A>0,$
\begin{equation*}
H_B(\delta,B_{j,\varepsilon_n},Q_n)\leq H_{\infty}\left(\frac{\delta}{2},\mathcal{X}\right)\leq \frac{A}{\delta},
\end{equation*}
where we have used the fact that $B_{j,\varepsilon_n}\subseteq\mathcal{X},$ as well as Lemma 2.1 and Theorem 2.4 from \cite{geer00}. Therefore,
\begin{align*}
\int_{0}^{\sqrt{2}R_n\sigma_0} H_B^{1/2}\left( \frac{u}{\sqrt{2}\sigma_0},B_{j,\varepsilon_n},Q_n\right) du &\leq \sqrt{A}\int_0^{\sqrt{2}R_n\sigma_0}\left( \frac{u}{\sqrt{2}2\sigma_0} \right)^{-1/2}du\\
& = 4{\sqrt{AR_n}}\sigma_0.
\end{align*}
Since
\begin{equation*}
n \overline{\delta}^2 2^{2(j+1)}\varepsilon_n^4 \geq 16 C_0^2 A \sigma_0^2  \left(\frac{4\mathcal{K}^2}{\kappa^4}+1\right)  2^{(j+1)/2}\varepsilon_n
\end{equation*}
for all $n$ large enough and all $j=0,1,\ldots,M_{\varepsilon_n}$ (this follows from the assumption that $\varepsilon_n\asymp n^{-1/3}\log n$), we get that \eqref{8.28bisbis}, and hence \eqref{8.28bis} too, hold. Thus all the assumptions from Corollary 8.8 in \cite{geer00} are satisfied. As a result, the inequality (8.30) from Corollary 8.8 combined with formula \eqref{pbjn} and some further bounding gives that
\begin{multline*}
\pp\left( \sup_{s\in A_{j,\varepsilon_n}} \prod_{i=1}^n \frac{p_{i,n,s}(Y_{i,n})}{ p_{i,n,s}(Y_{i,n}) } \geq \exp\left( -c_1 n\varepsilon_n^2 \right) \right)\\
\leq C \exp\left( - \frac{ (\widetilde{c}_0\kappa^2/(2\mathcal{K}^4) - c_1)^2 }{8 C^2\sigma_0^2(4\mathcal{K}^2/\kappa^4+1)} n\varepsilon_n^2 \right)
\end{multline*}
holds for all $n$ large enough and all $j=0,1,\ldots,M_{\varepsilon_n}.$ The statement of the lemma is an easy consequence of this bound, the fact that $M_{\varepsilon_n}\asymp \log_2 (1/\varepsilon_n)$ for $\varepsilon_n\rightarrow 0$ and the fact that $\varepsilon_n\asymp n^{-1/3}\log n.$
\end{proof}

\begin{lemma}
\label{lemma3} Under the same conditions as in Lemma \ref{lemma2}, there exist two constants $\widetilde{c}_0>0$ and $\widetilde{C}_0>0,$ such that for all $n$ large enough and all $s\in A_{j,\varepsilon_n},$ $j=0,1,\ldots,M_{\varepsilon_n},$ we have
\begin{equation*}
\sum_{i=1}^n \ex[Z_{i,n,s}(Y_{i,n})] \leq - \frac{\widetilde{c}_0\kappa^2}{\mathcal{K}^4}2^j\varepsilon_n^2 n +\widetilde{C}_0.
\end{equation*}
\end{lemma}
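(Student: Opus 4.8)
The plan is to compute the expectation $\ex[Z_{i,n,s}(Y_{i,n})]$ explicitly, exploiting the Gaussian structure of the increments. Under $\pp$, the increment $Y_{i,n}$ is centred Gaussian with variance $\tau_{i,0}^2 \defeq \int_{t_{i-1,n}}^{t_{i,n}} s_0^2(u)\,du$, while under $\mathbb{P}_{i,n,s}$ its variance is $\tau_{i,s}^2 \defeq \int_{t_{i-1,n}}^{t_{i,n}} s^2(u)\,du$. Writing out $Z_{i,n,s} = \log(p_{i,n,s}/p_{i,n,0})$ gives
\begin{equation*}
Z_{i,n,s}(Y_{i,n}) = \frac12\log\frac{\tau_{i,0}^2}{\tau_{i,s}^2} - \frac{Y_{i,n}^2}{2}\left(\frac{1}{\tau_{i,s}^2} - \frac{1}{\tau_{i,0}^2}\right),
\end{equation*}
and since $\ex[Y_{i,n}^2] = \tau_{i,0}^2$, taking expectations and recalling $f_s(z_i) = (\tau_{i,0}^2 - \tau_{i,s}^2)/\tau_{i,s}^2$ yields
\begin{equation*}
\ex[Z_{i,n,s}(Y_{i,n})] = \frac12\left\{ \log(1+f_s(z_i)) - f_s(z_i) \right\}.
\end{equation*}
Thus the sum in the statement is exactly the quantity analysed in Lemma \ref{lemma1}, except that now $s$ ranges over $A_{j,\varepsilon_n}$ rather than $V_{s_0,\widetilde{\varepsilon}_n}$.

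Next I would apply the one-sided bound $\log(1+t) - t \le -t^2/(2(1+t))$ (or the cruder $|\log(1+t)-t|\le t^2$, valid once $|f_s(z_i)|<1/2$, which holds uniformly for $s\in\mathcal{X}$ by the lower bound $\kappa$ and some care — here one actually wants an upper bound on the negative quantity, so a two-sided control $c\, t^2 \le -(\log(1+t)-t) \le C\, t^2$ on the bounded range $|t|\le 2\mathcal{K}^2/\kappa^2$ is what is needed). This gives, for a constant $c>0$ depending only on $\kappa,\mathcal{K}$,
\begin{equation*}
\sum_{i=1}^n \ex[Z_{i,n,s}(Y_{i,n})] \le -\frac{c}{2}\sum_{i=1}^n f_s^2(z_i).
\end{equation*}
Then, as in Lemma \ref{lemma1}, I replace $f_s(z_i)$ by $g_s(z_i) = (s_0^2(z_i) - s^2(z_i))/s^2(z_i)$ at the cost of an $O(1/n)$ term per summand (uniformly over $\mathcal{X}$, by Assumption \ref{standing}(a)), and pass from the Riemann sum $n^{-1}\sum_i g_s^2(z_i)$ to the integral $\int_0^1 (s_0^2(u)-s^2(u))^2/s^4(u)\,du$, again with an $O(1/n)$ error uniform over $\mathcal{X}$ (this uses the Lipschitz bound $\|s'\|_\infty \le M$ to control the modulus of continuity of the integrand).

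The final step is to bound the integral from below by $\|s-s_0\|_2^2$ up to constants: since $s_0^2(u)-s^2(u) = (s_0(u)+s(u))(s_0(u)-s(u))$ and $s_0+s \ge 2\kappa$, while $s^4(u)\le \mathcal{K}^4$, we get $\int_0^1 (s_0^2-s^2)^2/s^4 \ge (4\kappa^2/\mathcal{K}^4)\|s-s_0\|_2^2$. For $s\in A_{j,\varepsilon_n}$ this is at least $(4\kappa^2/\mathcal{K}^4)2^j\varepsilon_n^2$. Combining, and absorbing the accumulated $O(1)$ Riemann-sum errors into the constant $\widetilde{C}_0$, produces the claimed inequality with $\widetilde{c}_0$ proportional to $c$ (which can be taken as small as needed, as the statement allows). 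The main obstacle is not any single estimate but bookkeeping: one must ensure the error terms from the $f_s \to g_s$ substitution and from the Riemann-sum-to-integral passage are genuinely $O(1)$ after summing over $i$ and uniform over all of $\mathcal{X}$ (hence over every $A_{j,\varepsilon_n}$ simultaneously), and one must pin down the right constant $c$ in the quadratic lower bound for $-(\log(1+t)-t)$ on the relevant compact $t$-range so that the resulting $\widetilde{c}_0$ is compatible with the constraints imposed in Lemma \ref{lemma2}.
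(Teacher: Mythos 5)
Your proposal is correct and is essentially the paper's own argument: exact computation of $\ex[Z_{i,n,s}(Y_{i,n})]=\tfrac12\{\log(1+f_s(z_i))-f_s(z_i)\}$, the quadratic bound $\log(1+t)-t\le-\widetilde{c}_0t^2$ on the bounded range of $f_s$, the Riemann-sum-to-integral passage with an $O(1)$ error uniform over $\mathcal{X}$, and the lower bound $\int_0^1(s_0^2-s^2)^2/s^4\,du\ge(4\kappa^2/\mathcal{K}^4)\|s-s_0\|_2^2\ge(4\kappa^2/\mathcal{K}^4)2^j\varepsilon_n^2$ on $A_{j,\varepsilon_n}$. (Only your aside that $|f_s(z_i)|<1/2$ holds uniformly over $\mathcal{X}$ is inaccurate for $s$ far from $s_0$, but you rightly discard it in favour of the one-sided bound $c\,t^2\le-(\log(1+t)-t)$ on the compact range $[\kappa^2/\mathcal{K}^2-1,\,\mathcal{K}^2/\kappa^2-1]$, which is all that is needed.)
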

\begin{proof}
We have
\begin{align*}
\ex[Z_{i,n,s}(Y_{i,n})] &=\frac{1}{2}\log\left( 1 + \frac{ \int_{z_i}^{z_{i+1}} [s_0^2(u)-s^2(u)]du }{ \int_{z_i}^{z_{i+1}} s^2(u)du } \right)\\
&-\frac{1}{2}\frac{ \int_{z_i}^{z_{i+1}} [s_0^2(u)-s^2(u)]du }{ \int_{z_i}^{z_{i+1}} s^2(u)du } .
\end{align*}
A standard argument shows that for any fixed constant $\overline{C}_0>0,$ there exists another constant $\widetilde{c}_0>0,$ such that for $-1\leq x< \overline{C}_0,$ the inequality $\log(1+x)-x\leq - \widetilde{c}_0 x^2$ holds. Therefore, for all $n$ large enough,
\begin{align*}
\sum_{i=1}^n \ex[Z_{i,n,s}(Y_{i,n})] &\leq - \frac{ \widetilde{c}_0 n }{2}\frac{1}{n} \sum_{i=1}^n \left\{  \frac{ \int_{z_i}^{z_{i+1}} [s_0^2(u)-s^2(u)]du }{ \int_{z_i}^{z_{i+1}} s^2(u)du } \right\}^2\\
&=-\frac{ \widetilde{c}_0 n }{2} \int_0^1 \frac{ (s^2(u)-s^2_0(u))^2 }{s^4(u)}du+O(1)\\
&\leq - \frac{\widetilde{c}_0\kappa^2}{\mathcal{K}^4}2^j\varepsilon_n^2 n+\widetilde{C}_0,
\end{align*}
where we used Assumption \ref{standing} (a) and the definition of $A_{j,\varepsilon_n}.$ Here $\widetilde{C}_0>0$ is some constant independent of a particular $s$ and $n.$ This completes the proof of the lemma.
\end{proof}

\bibliographystyle{plainnat}

\end{document}